\theoremstyle{plain}
\newtheorem{thm}{Theorem}[section]
\newtheorem{lem}[thm]{Lemma}
\newtheorem{conj}[thm]{Conjecture}
\newtheorem{que}[thm]{Question}
\noindent \emph{Proof.} {}{#1}{}}{\hfill
\theoremstyle{plain} 
\newcommand{\thistheoremname}{}
\newtheorem{genericthm}[section]{\thistheoremname}
\theoremstyle{definition}
\newcounter{counter}
\newcommand{\less}{\setminus}
 \def\dfn#1{{\sl #1}}
\title{The minimum degree of  $(K_s, K_t)$-co-critical graphs}
\author{Ivan Casas-Rocha\thanks{Current address: Department of Mathematics,   University of Minnesota. E-mail: {\tt casas061@umn.edu}} \hskip 1cm  Benjamin Snyder\thanks{Current address: Department of Statistics, Texas A\&M University. E-mail: {\tt b.snyder@tamu.edu}}\hskip 1cm   Zi-Xia Song\thanks{Supported by  NSF grant    DMS-2153945. E-mail: {\tt  Zixia.Song@ucf.edu}}}
  \affil{  \small {Department  of Mathematics, University of Central Florida, Orlando, FL 32816, USA}}
\begin{document}
\maketitle
\begin{abstract}
   Given   graphs $G, H_1, H_2$, we write  $G \rightarrow ({H}_1, H_2)$ if  every \{red, blue\}-coloring of the edges of $G$ contains  a  red copy of $H_1$ or a blue copy of $H_2$. A non-complete graph $G$ is $(H_1, H_2)$-co-critical if $G  \nrightarrow ({H}_1, H_2)$ and  $G+e\rightarrow ({H}_1, H_2)$ for every edge $e$ in the complement of $G$.  The notion of co-critical graphs was initiated by    Ne$\check{s}$et$\check{r}$il in 1986. 
 Galluccio,    Simonovits and   Simonyi in  1992 proved that every $(K_3, K_3)$-co-critical graph on $n\ge6$ vertices has minimum degree at least four, and the bound is sharp for all $n\ge 6$. In this paper, we first extend the aforementioned result to all $(K_s, K_t)$-co-critical graphs by showing that every $(K_s, K_t)$-co-critical graph  has minimum degree at least $2t+s-5$, where $t\ge s\ge 3$. We then prove that every $(K_3, K_4)$-co-critical graph on $n\ge9$ vertices has minimum degree at least seven, and the bound is sharp for all $n\ge 9$.   This answers   a question of the third author in the positive for the  case $s=3$ and $t=4$. 
\end{abstract}
{\bf AMS Classification}: 05C55; 05C35.\\
{\bf Keywords}:  co-critical graph; $K_t$-saturated graph; Ramsey number

\baselineskip 16pt

\section{Introduction}

 In this paper we consider graphs that are finite, simple and undirected.     For a graph $G$, we  use $V(G)$ to denote the vertex set, $E(G)$ the edge set,   $|G|$ the number of vertices in $G$,    $N(x)$ the neighborhood of vertex $x$ in $G$, $\delta(G)$ the minimum degree, $\Delta(G)$ the maximum degree,   and $\overline{G}$ the complement of $G$. If $S, T \subseteq V(G)$ are disjoint, we say that $S$ is \dfn{complete to} $T$ if every vertex in $S$ is adjacent to all vertices in $T$; and $S$ is \dfn{anti-complete to} $T$ if no vertex in $S$ is adjacent to any vertex in $T$.  We denote by $S\less T$  the set $S- T$,  and  $G \less S$ the subgraph obtained from $G$ by deleting all vertices in $S$.   The subgraph of $G$ induced by $S$, denoted $G[S]$, is the graph obtained from $G$ by deleting all vertices in $ V(G)\less S$.  For convenience, we use   $S\less v  $  to denote $S\less  \{v\}$, and say $v$ is complete to $S$ (resp.  anti-complete to $S$) when $T =\{v\}$. We use $G+e$ to denote the graph obtained from $G$ by adding the new edge $e$, where $e$ is an edge in $\overline{G}$.   Given a graph $H$,   $G$ is \emph{$H$-free} if $G$ does not contain $H$ as a subgraph;  and $G$ is \emph{$H$-saturated} if $G$ is $H$-free but $G+e$ is not $H$-free for every $e$ in $\overline{G}$.       
   \medskip

For any positive integer $k$, we write  $[k]$ for the set $\{1,2, \ldots, k\}$. A \dfn{$k$-edge coloring} of a graph $G$ is a function $\tau:E(G)\to [k]$.  We think of the set  $[k]$ as a set of colors, and we may identify a member of $[k]$ as a color, say, color $k$ is blue. Given an integer $k \ge 1 $ and graphs $G$, ${H}_1, \ldots, {H}_k$,   let   $\tau : E(G) \to [k]$  be a  $k$-edge coloring of $G$  with color classes  $E_1, \ldots, E_k$.  For each $\ell\in[k]$, we use $G_\ell$   to denote the spanning subgraph of $G$  with edge set   $E_\ell$.
We define  $\tau$ to be a  \emph{critical coloring} of $G$ if $G$ has no monochromatic $H_\ell$ for each $\ell\in[k]$, that is,    $G_\ell$ is $H_\ell$-free.  For each vertex $v$ in $G$,   we use  $N_\ell(v)$ to denote the   neighborhood of $v$ in $G_\ell$. We simply write  $N_b(v)$, $E_b$ and $G_b$  if the color $\ell$ is blue; and $N_r(v)$, $E_r$ and $G_r$  if the color $\ell$ is red.  
We write \dfn{$G \rightarrow ({H}_1, \ldots, {H}_k)$} if  $G$ admits no critical coloring; that is, every  $k$-edge coloring of   $G$ contains a monochromatic  ${H}_i$ in color $i$ for some $i\in[k]$. The classical \dfn{Ramsey number}  $r({H}_1, \dots, {H}_k)$ is the minimum positive integer $n$ such that $K_n \rightarrow ({H}_1, \dots, {H}_k)$. \medskip

Following~\cite{Galluccio1992,Nesetril1986}, a non-complete graph $G$ is $(H_1, \dots, H_k)$-\dfn{co-critical} if $G \not\rightarrow ({H}_1,\dots, {H}_k)$, but $G + e \rightarrow ({H}_1,\dots, {H}_k)$ for every edge $e$ in $\overline{G}$. The notion of co-critical graphs was initiated by Ne$\check{s}$et$\check{r}$il~\cite{Nesetril1986} in 1986   when he asked the following question regarding   $(K_3, K_3)$-co-critical graphs: 
\begin{quote}
 Are there   infinitely  many \dfn{minimal} co-critical graphs, i.e.,  co-critical graphs which lose this property when any vertex is deleted? Is $K_6^-$ the only one? 
\end{quote}
This was answered in the positive by Galluccio, Simonovits and Simonyi~\cite{Galluccio1992}.  They constructed infinitely many minimal $(K_3, K_3)$-co-critical graphs that are  $K_5$-free.  Szab\'o~\cite{Szabo1996} then constructed   infinitely many  nearly regular $(K_3, K_3)$-co-critical graphs with low maximum degree.     In the same paper they also studied   the minimum  degree of $(K_3, K_3)$-co-critical graphs, and made an observation on the chromatic number of  $(K_{t_1}, \ldots,  K_{t_k})$-co-critical graphs, where $t_1\ge2, \ldots, t_k\ge2$ are integers.

\begin{thm}[Galluccio,  Simonovits and Simonyi~\cite{Galluccio1992}]\label{t:K3K3}
Every $(K_3, K_3)$-co-critical graph on $n\ge6$ vertices  has minimum degree at least four. The bound is sharp for all $n\ge 6$.   

\end{thm}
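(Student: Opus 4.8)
The plan is to prove the two halves separately: the lower bound $\delta(G)\ge 4$ for every $(K_3,K_3)$-co-critical graph on $n\ge 6$ vertices, and sharpness by exhibiting, for each $n\ge 6$, such a graph with a vertex of degree exactly $4$. The backbone of the lower bound is the following observation about any critical coloring $\tau$ of $G$ (one exists since $G\nrightarrow(K_3,K_3)$): for every non-edge $uw$ of $G$, the vertices $u$ and $w$ have both a common red neighbor and a common blue neighbor in $\tau$. Indeed, since $G+uw\rightarrow(K_3,K_3)$, neither extension of $\tau$ to $G+uw$ is critical; coloring $uw$ red must create a red triangle, necessarily through $uw$ as $G_r$ is triangle-free, which is exactly a common red neighbor, and symmetrically for blue. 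Now suppose $\delta(G)\le 3$ and let $v$ be a vertex of minimum degree. Since $n\ge 6>d(v)+1$, the set $U$ of non-neighbors of $v$ is nonempty, so applying the observation to $U$ shows that both $N_r(v)$ and $N_b(v)$ are nonempty; in particular $d(v)\ge 2$.

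I would then dispose of the small cases by recoloring edges at $v$. If $d(v)=2$, write $N_r(v)=\{a\}$ and $N_b(v)=\{b\}$. Recoloring $va$ blue keeps the coloring critical unless it creates a blue triangle, i.e. unless $ab\in E_b$; recoloring $vb$ red keeps it critical unless $ab\in E_r$. As $ab$ cannot be simultaneously red and blue, one recoloring succeeds and produces a critical coloring in which $v$ is monochromatic, contradicting that both $N_r(v),N_b(v)$ are nonempty. If $d(v)=3$, then both color classes nonempty forces the split to be $\{1,2\}$, so after relabeling colors we may assume $N_r(v)=\{a\}$ and $N_b(v)=\{b_1,b_2\}$. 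Here every $u\in U$ has $ua\in E_r$. Recoloring $va$ blue makes $v$ all-blue unless $a$ is blue-adjacent to $b_1$ or $b_2$, so we may assume $ab_1\in E_b$; then recoloring $vb_1$ red stays critical (as $ab_1\notin E_r$) and gives a critical coloring with $N_r(v)=\{a,b_1\}$ and $N_b(v)=\{b_2\}$. In this coloring every $u\in U$ has $ub_2\in E_b$, whence (using $ua\in E_r$ and $ub_2\in E_b$) no two vertices of $U$ can be joined by a red or a blue edge, so $U$ is independent.

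The hard part will be finishing the $d(v)=3$ case. Making $v$ all-red by recoloring $vb_2$ is obstructed exactly when $ab_2\in E_r$ or $b_1b_2\in E_r$, and resolving these requires a finite case analysis on the colors of the at most three edges inside $N(v)=\{a,b_1,b_2\}$, typically recoloring the offending internal edge to blue before completing the switch. In each branch the aim is either to reach a critical coloring in which $v$ is monochromatic (contradicting the common-neighbor observation) or to exhibit a critical coloring of $G+e$ for a suitable non-edge $e$ (contradicting co-criticality). This bookkeeping is the main obstacle; the rest is forced.

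For sharpness I would take the explicit family generalizing $K_6^-$: let $G_n$ have vertex set $\{w_1,w_2,w_3,w_4\}\cup\{z_1,\dots,z_{n-4}\}$, where $\{w_1,w_2,w_3,w_4\}$ induces a clique, each $z_i$ is complete to $\{w_1,w_2,w_3,w_4\}$, and the $z_i$ are pairwise non-adjacent. Then $\deg(z_i)=4$ and $\deg(w_j)=n-1$, so $\delta(G_n)=4$, $G_n$ is non-complete for $n\ge 6$, and $G_6=K_6^-$. To see $G_n\nrightarrow(K_3,K_3)$, color $w_1w_2,w_2w_3,w_3w_4$ and each $z_iw_1,z_iw_4$ red, and $w_1w_3,w_1w_4,w_2w_4$ and each $z_iw_2,z_iw_3$ blue; a direct check shows neither color class contains a triangle. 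Finally, the only non-edges of $G_n$ are the pairs $z_iz_j$, and $G_n+z_iz_j$ contains a $K_6$ on $\{z_i,z_j,w_1,w_2,w_3,w_4\}$; since $r(K_3,K_3)=6$ gives $K_6\rightarrow(K_3,K_3)$, every such addition arrows. Hence $G_n$ is $(K_3,K_3)$-co-critical with $\delta(G_n)=4$, proving sharpness for all $n\ge 6$.
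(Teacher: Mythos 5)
Your sharpness half is correct and self-contained: $K_4+\overline{K}_{n-4}$ is exactly the Hanson--Toft graph $K_{r-2}+\overline{K}_{n-r+2}$ with $r=r(K_3,K_3)=6$, your explicit coloring has no monochromatic triangle, and adding any $z_iz_j$ creates a $K_6$. The common-neighbor observation and the cases $d(v)\le 2$ are also sound. The genuine gap is the case $d(v)=3$, which is the heart of the theorem and which you leave as a plan rather than a proof; moreover, the plan as described would fail. Concretely, after your reduction ($N_r(v)=\{a,b_1\}$, $N_b(v)=\{b_2\}$, $U\cup\{v\}$ independent, $ua\in E_r$ and $ub_2\in E_b$ for all $u\in U$), consider the branch $ab_1,b_1b_2\in E_b$, $ab_2\in E_r$. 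For each $u\in U$ the edge $ub_1$ cannot be blue (it would close the blue triangle $ub_1b_2$), and $ub_1$ cannot be a non-edge (a common red neighbor of $u$ and $b_1$ would have to be $a$, but $ab_1\in E_b$); hence $U$ is complete to $\{a,b_1,b_2\}$ and $G$ is the join of a triangle with the independent set $U\cup\{v\}$. In \emph{every} critical coloring of this graph the inner triangle carries both colors, so any outer vertex whose three edges were all red (all blue) would lie in a red (blue) triangle; thus every critical coloring assigns both colors at $v$, and no sequence of recolorings of the kind you describe can ever terminate in a coloring with $v$ monochromatic. In this branch the contradiction can only come from your second option, exhibiting a critical coloring of $G+uu'$ with $u,u'\in U$, and that requires coordinating genuinely different color patterns at $u$ and $u'$ toward $\{a,b_1,b_2\}$ --- it is not bookkeeping on the three edges inside $N(v)$.

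The missing idea is a chromatic-number argument, and you are one line away from it: since $U\cup\{v\}$ is independent, $\chi(G)\le 1+d(v)=4$, hence $\chi(G+uu')\le 5$; a proper $5$-coloring is a homomorphism into $K_5$, and pulling back the triangle-free $2$-edge-coloring of $K_5$ along it yields a critical coloring of $G+uu'$, so $G+uu'\nrightarrow(K_3,K_3)$, contradicting co-criticality. Equivalently, $\chi(G)\le 4<5=r(K_3,K_3)-1$ contradicts \cref{l:chi} directly. This is precisely how the paper gets past the same sticking point: \cref{t:K3K3} itself is quoted from Galluccio--Simonovits--Simonyi, and the paper's own derivation of the bound, \cref{mindegK} with $t_1=t_2=3$, runs through Hajnal's theorem (\cref{t:Hajnal}) applied to the $K_3$-saturated blue graph together with \cref{lem:blue}(c$_2$), whose proof at the corresponding moment (when $U$ would be a stable set) deduces $\chi(G)\le 1+d(x)<r-1$ and invokes \cref{l:chi} rather than attempting any recoloring. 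Until your case analysis is replaced by (or completed with) such an argument, the lower bound is not proved.
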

\begin{lem}[Galluccio,  Simonovits and Simonyi~\cite{Galluccio1992}]\label{l:chi}
If  $G$ is a   $(K_{t_1}, \ldots,  K_{t_k})$-co-critical graph, then    $\chi(G)\ge r-1$,  and the equality holds  only when  $G$ is a complete $(r-1)$-partite graph, where $\chi(G)$ denotes the chromatic number of $G$ and $r=r(K_{t_1}, \ldots,  K_{t_k})$.  

\end{lem}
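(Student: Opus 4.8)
The plan is to isolate a single pullback principle and derive both conclusions from it. The principle is: if $H$ is any graph with $\chi(H) \le r-1$, then $H \not\rightarrow (K_{t_1}, \ldots, K_{t_k})$. To prove this I would use the minimality of $r = r(K_{t_1}, \ldots, K_{t_k})$: since $K_{r-1} \not\rightarrow (K_{t_1}, \ldots, K_{t_k})$, the complete graph $K_{r-1}$ admits a critical coloring $\sigma$, that is, a $k$-edge-coloring with no monochromatic $K_{t_\ell}$ in color $\ell$. A proper $\chi(H)$-coloring of $H$ is a homomorphism $\phi \colon H \to K_{\chi(H)} \subseteq K_{r-1}$, so I can pull $\sigma$ back by setting $\tau(uv) = \sigma(\phi(u)\phi(v))$ for each edge $uv$ of $H$; this is well defined because adjacent vertices receive distinct colors, so $\phi(u) \ne \phi(v)$ is an edge of the complete graph $K_{r-1}$. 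If $\tau$ had a monochromatic $K_{t_\ell}$ in color $\ell$ on a vertex set $W$, then the vertices of $W$ would be pairwise adjacent in $H$ and hence have pairwise distinct images under $\phi$, producing a monochromatic $K_{t_\ell}$ in color $\ell$ under $\sigma$ and contradicting that $\sigma$ is critical. Thus $\tau$ is a critical coloring of $H$, and $H \not\rightarrow (K_{t_1}, \ldots, K_{t_k})$.

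The lower bound on $\chi(G)$ is then immediate. Suppose for contradiction that $\chi(G) \le r-2$. Since $G$ is co-critical it is non-complete, so it has a non-edge $e$; adding one edge raises the chromatic number by at most one, so $\chi(G+e) \le \chi(G)+1 \le r-1$. By the pullback principle $G+e \not\rightarrow (K_{t_1}, \ldots, K_{t_k})$, which contradicts the requirement $G+e \rightarrow (K_{t_1}, \ldots, K_{t_k})$ imposed by co-criticality. Hence $\chi(G) \ge r-1$.

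For the equality case I would run the pullback with a specific homomorphism and test it against every non-edge. Assume $\chi(G) = r-1$ with color classes $V_1, \ldots, V_{r-1}$, let $\phi$ be the associated homomorphism $G \to K_{r-1}$, and let $\tau$ be the critical coloring of $G$ obtained by pulling back a critical coloring $\sigma$ of $K_{r-1}$. Consider any non-edge $e = xy$ whose endpoints lie in different color classes, so that $\phi(x) \ne \phi(y)$. Then $\phi$ is still a homomorphism from $G+e$ to $K_{r-1}$, and the same pullback yields a critical coloring of $G+e$; this forces $G+e \not\rightarrow (K_{t_1}, \ldots, K_{t_k})$, again contradicting co-criticality. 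Therefore every non-edge of $G$ has both endpoints in a common color class. Since each class is independent while any two vertices in distinct classes are adjacent, $G$ is exactly the complete $(r-1)$-partite graph with parts $V_1, \ldots, V_{r-1}$.

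The substance of the argument is concentrated entirely in the pullback principle; once it is in place, both conclusions follow by adding a single well-chosen edge and invoking co-criticality. The one point demanding care is verifying that a monochromatic clique in the pulled-back coloring really produces one in $K_{r-1}$, which rests on the facts that the vertices of a clique receive distinct colors under a proper coloring and that $K_{r-1}$ is complete, so no edge of $H$ (or of $G+e$) is ever mapped to a non-edge. I do not anticipate any serious obstacle beyond phrasing this cleanly.
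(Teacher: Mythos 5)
Your proof is correct and complete. A remark on context: the paper itself gives no proof of this lemma --- it is quoted directly from Galluccio, Simonovits and Simonyi \cite{Galluccio1992} --- so there is no internal argument to compare against; but your pullback principle (a proper $(r-1)$-coloring is a homomorphism into $K_{r-1}$, and a critical coloring of $K_{r-1}$, which exists by minimality of the Ramsey number, pulls back along any homomorphism) is exactly the standard route to this result, and both the bound $\chi(G)\ge r-1$ and the characterization of equality follow from it just as you describe. The only points worth phrasing carefully are the ones you already flag: cliques map injectively under a proper coloring, and in the equality case a non-edge joining two distinct color classes leaves the $(r-1)$-coloring proper after the edge is added, so co-criticality forces every non-edge to lie inside a class, which is precisely the statement that $G$ is complete $(r-1)$-partite.
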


It is worth noting that \cref{l:chi} is not true in general. It is simple to check that  the complete bipartite graph $K_{2t-2, 2t-2}$ is $(K_{1,t}, K_{1,t})$-co-critical  and  $r(K_{1,t}, K_{1,t})\ge 2t-1>3$ for all $t>2$.   The {\dfn{join}} $G+H$  of two vertex disjoint graphs $G$ and $H$ is the graph having vertex set $V(G) \cup V(H)$  and edge set $E(G) \cup E(H) \cup \{xy\mid  x\in V(G),  y\in V(H)\}$.    Hanson and Toft~\cite{Hanson1987}  observed in 1987 that for all $n \ge r:=r(K_{t_1},\dots,K_{t_k})$,  the graph $K_{r-2}+ \overline K_{n-r+2}$ is $(K_{t_1},\dots, K_{t_k})$-co-critical with $(r - 2)(n - r + 2) + \binom{r - 2}{2}$ edges; they made the following  conjecture.

\begin{conj}[Hanson and Toft~\cite{Hanson1987}]\label{HTC}  
If $G$ is a $(K_{t_1},\dots, K_{t_k})$-co-critical graph on $n$ vertices, then 
\begin{align*}
|E(G)|\ge (r - 2)(n - r + 2) + \binom{r - 2}{2}, 
\end{align*}
where $r = r(K_{t_1}, \dots, K_{t_k})$.   The bound is best possible for all $n\ge r$.
\end{conj}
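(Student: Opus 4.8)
The plan is to combine a local Ramsey-saturation structure with an induction on $n$ whose increment matches the slope $r-2$ of the conjectured bound. The engine of the argument is the following structure, which I would establish first. Since $G \nrightarrow (K_{t_1}, \ldots, K_{t_k})$, fix a critical coloring $\tau$ with color classes $E_1, \ldots, E_k$. For any non-edge $uv \in E(\overline{G})$ and any color $i \in [k]$, extend $\tau$ to $G + uv$ by giving $uv$ color $i$. As $G + uv \rightarrow (K_{t_1}, \ldots, K_{t_k})$, this coloring has a monochromatic clique; since $\tau$ is already critical on $G$, the clique must use $uv$ and lie in color $i$, so $u$ and $v$ have at least $t_i - 2$ common neighbors spanning a $K_{t_i-2}$ inside $G_i$. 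Thus each $G_i$ is $K_{t_i}$-saturated relative to the non-edges of $G$, and this is the fact I would exploit repeatedly.

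A useful first consequence is a degree bound. Because every edge of $G$ receives exactly one color, the common color-$i$ neighborhoods $N_i(u) \cap N_i(v)$ are pairwise disjoint as $i$ ranges over $[k]$; hence $|N(u) \cap N(v)| \ge \sum_{i \in [k]} (t_i - 2)$ for every non-edge $uv$, giving $\delta(G) \ge \sum_{i}(t_i - 2)$. This is, however, far from the target $\delta(G) \ge r - 2$ needed to feed the induction: for $(K_3, K_3)$ it yields only $2$, whereas \cref{t:K3K3} supplies the sharp value $4 = r - 2$. Upgrading the trivial bound to $r - 2$ is exactly the minimum-degree problem the paper addresses by a finer Ramsey analysis of the cliques attached to a single non-edge, and I would import those bounds here.

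For the edge count itself I would induct on $n$, using that $f(n) := (r-2)(n-r+2) + \binom{r-2}{2}$ satisfies $f(n) - f(n-1) = r - 2$. The base case $n = r$ is forced: a non-complete graph on $r$ vertices misses at least one edge, and if it missed two edges $e_1, e_2$ then $G + e_1$ would still omit $e_2$, hence be a subgraph of $K_r^-$ and so (like $K_r^-$) fail to arrow, contradicting co-criticality; thus the unique co-critical graph on $r$ vertices is $K_r^-$, with exactly $f(r) = \binom{r}{2} - 1$ edges. Granting $\delta(G) \ge r - 2$, the handshake inequality only gives $|E(G)| \ge (r-2)n/2$, roughly half of $f(n)$, so the real task is to produce an \emph{apex} set $A$ of $r - 2$ vertices nearly complete to $V(G) \less A$; equivalently, to find a vertex $v$ whose deletion removes at most the $r - 2$ edges the induction can afford while leaving a structure to which the hypothesis still applies.

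The main obstacle is precisely this inductive step: co-critical graphs are \emph{not} closed under vertex deletion --- the introduction's infinite family of minimal co-critical graphs shows $G - v$ may cease to be co-critical for every $v$ --- so one cannot simply invoke the hypothesis on $G - v$. Worse, each color class $G_i$ is only locally saturated, and only with respect to $\overline{G}$ rather than its own complement, so the $k$ classes are tightly coupled and cannot be analyzed independently as in the Erd\H{o}s--Hajnal--Moon proof for a single $K_t$. I would attack this by showing that the monochromatic cliques attached to many different non-edges must overlap in a common set of $r - 2$ high-degree vertices (ruling out ``spread-out'' configurations where low-degree vertices hang off disjoint cliques), thereby exhibiting the apex set directly rather than by deletion. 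I expect this coupling to be tractable only when $r$ is small, where the Ramsey pattern at each non-edge is rigid; controlling it for general $(t_1, \ldots, t_k)$ is what keeps the full conjecture open.
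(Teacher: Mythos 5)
The statement you were asked to prove is \cref{HTC}, the Hanson--Toft conjecture. The paper does not prove it --- it states explicitly that the conjecture ``remains wide open,'' the only settled case being $(K_3,K_3)$ for $n\ge 56$ \cite{Chen2011} --- so there is no proof in the paper to compare against, and your proposal does not close the gap either: it is an outline whose decisive steps are missing, as your own final paragraph concedes.

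Concretely, there are two gaps. First, your induction on $n$ requires $\delta(G)\ge r-2$, but that bound is itself open: it is precisely \cref{question}, which is answered in the literature and in this paper only for $(K_3,K_3)$ (\cref{t:K3K3}) and $(K_3,K_4)$ (\cref{K3K4}). The bound provable in general, \cref{mindegK}, is $t_k-2k-1+\sum_{i=1}^k t_i$, which is linear in the $t_i$ and falls far short of $r-2$ (already $6<7$ for $(K_3,K_4)$, with the deficit growing rapidly since Ramsey numbers do); so when you write that you would ``import those bounds here,'' the bounds you need do not exist. Second, even granting $\delta(G)\ge r-2$, the inductive step --- exhibiting an apex set of $r-2$ vertices nearly complete to the rest, or otherwise passing to a smaller graph --- is never executed. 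You correctly identify why the naive step fails (co-criticality is not hereditary, and the color classes $G_i$ are only saturated relative to $\overline{G}$ and are coupled to one another), but then replace the argument by a statement of intent: the claim that the monochromatic cliques attached to different non-edges ``must overlap in a common set of $r-2$ high-degree vertices'' is exactly the hard content of the conjecture, asserted rather than proved. Your preliminary observations are sound --- the local saturation of each $G_i$ at non-edges of $G$ is essentially \cref{lem:blue}(b), and the base case $G=K_r^-$ at $n=r$ is correct, using that $K_{r-2}+\overline{K}_2$ does not arrow --- but they do not substitute for the missing core.
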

\medskip

 It was shown in \cite{Chen2011} that every $(K_3,K_3)$-co-critical graph on $n\ge 56$ vertices has at least $4n-10$ edges. This settles the first non-trivial case of Conjecture~\ref{HTC} for sufficiently large $n$.    Conjecture~\ref{HTC} remains wide open. 
 We refer the reader to a recent paper by Zhang and   the third author~\cite{SongZhang21} for further background on $(H_1, \dots, H_k)$-co-critical graphs, and to~\cite{ P3, Pk,c4star, DSY22,survey21, Ferrara2014, RolekSong18,SongZhang21}   for recent work on minimizing the number of edges in  $(H_1, \ldots, H_k)$-co-critical graphs. 
 It is worth noting that  the graph $K_{r-2}+ \overline K_{n-r+2}$ has minimum degree $r-2$. With the support of \cref{t:K3K3}, the third author  recently raised the following question.

\begin{que}[Song]\label{question}
Is it true that  every $(K_{t_1}, \ldots,  K_{t_k})$-co-critical graph has minimum degree at least $r(K_{t_1},\dots,K_{t_k})-2$?
\end{que}

The purpose  of this paper is  to   study the minimum  degree of $(K_{t_1}, \ldots, K_{t_k})$-co-critical graphs. We need to introduce more definitions. A \dfn{clique} in a graph $G$ is a set of pairwise adjacent vertices; and a \dfn{stable  set} is a set of pairwise non-adjacent vertices. A \dfn{$t$-clique} is a clique of order $t$. 
Let  $\tau: E(G)\rightarrow [k]$ be a $k$-edge coloring of a graph $G$ with color classes $E_1, \ldots, E_k$.       For  two disjoint sets $S, T\subseteq V(G)$ and color $\ell\in[k]$,   we  say that  $S$ is \dfn{$\ell$-complete}     to $T$    if  $S$ is complete to $T$  in $G_\ell$.  We simply say $S$ is \dfn{blue-complete}     to $T$  if the color $\ell$ is blue.  For convenience, we say  $v$  is   blue-complete to $T$ when $S=\{v\}$.  We say a vertex $x\in V(G)$ is \dfn{blue-adjacent} to a vertex $y\in V(G)$ if the edge $xy$ is colored blue under $\tau$; and  $x$ is \dfn{blue-complete} to an edge  $yz\in E(G)$ if $xy, xz$ are colored blue under $\tau$. Similar definitions hold when blue is replaced by another color.  Given  an $ (H_1\ldots, H_k)$-co-critical graph $G$,  we see that  $G$ admits at least one critical coloring but,  for any edge $e\in E(\overline{G})$,
$G+e$ admits  no critical coloring.  Let $\tau : E(G) \to [k]$  be a  critical coloring of $G$ such that $|E_k|$ is maximum among all critical colorings of $G$. Then $G_k$ is  $H_k$-saturated, because $G_k$ is $H_k$-free  and  $G_k+e$ has a copy of $H_k$ for each $e\in E(\overline{G_k})$.
We need  a  result of Hajnal~\cite{Hajnal} on $K_t$-saturated graphs.

\begin{thm}[Hajnal  \cite{Hajnal}]\label{t:Hajnal}
If  $G$ is   $K_t$-saturated, then  either $\Delta(G) = |G|-1$ or $\delta(G) \ge 2(t-2)$. 
\end{thm}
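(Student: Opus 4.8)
The plan is to prove the equivalent statement that if $G$ is $K_t$-saturated and has no universal vertex (that is, $\Delta(G)\neq |G|-1$), then $\delta(G)\ge 2(t-2)$; this matches the stated ``or'' since the two alternatives are exactly ``$\Delta(G)=|G|-1$'' and ``$\delta(G)\ge 2(t-2)$''. Fix a vertex $v$ with $\deg(v)=\delta(G)$ and put $A=N(v)$. Since $v$ is not universal it has a non-neighbour, and the goal is to show $|A|\ge 2(t-2)$. I will use saturation only in the following form: for every non-adjacent pair $x,y$ the graph $G+xy$ contains a $K_t$, so $N(x)\cap N(y)$ contains a $(t-2)$-clique.

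First I would record two consequences. Applying this to $v$ and any non-neighbour shows that $A$ contains a $(t-2)$-clique; and since a $K_{t-1}$ inside $A$ together with $v$ would be a $K_t$, the graph $G[A]$ is $K_{t-1}$-free, so $\omega(G[A])=t-2$. Moreover, for every $(t-2)$-clique $C\subseteq A$ the common neighbourhood $\bigcap_{c\in C}N(c)$ is a stable set, because an edge inside it would extend $C$ to a $K_t$; note that $v$ lies in this stable set.

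The core of the argument is to rule out $|A|\le 2t-5$. Assume this bound and let $B=V(G)\setminus(N(v)\cup\{v\})$ be the non-empty set of non-neighbours of $v$. Each $b\in B$ yields a $(t-2)$-clique $C_b\subseteq A\cap N(b)$, and since $2(t-2)>|A|$ any two of these cliques must meet. The plan is then to exploit the $K_{t-1}$-freeness of $G[A]$ to show that this intersecting family of maximum cliques is anchored at a common vertex $a^\ast\in A$ lying in every $C_b$; this is where the clique bound $\omega(G[A])=t-2$ is essential, as it forbids the configurations that would otherwise let pairwise-intersecting cliques avoid a common core. Granting such an $a^\ast$, it is adjacent to every $b\in B$ (each $C_b$ contains it) and to $v$, so if it were also complete to $A$ it would be universal, a contradiction; hence there is a non-edge $a^\ast a'$ with $a'\in A$. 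Completing $a^\ast a'$ to a $K_t$ would require a $(t-2)$-clique inside $N(a^\ast)\cap N(a')$, and the final step is to show that no such clique exists: the available common neighbours split between vertices of $A$ (which carry no $K_{t-1}$) and vertices attached through a clique containing $a^\ast$, whose common neighbourhood is stable, so the needed clique cannot be assembled, contradicting saturation. This forces $|A|\ge 2(t-2)$.

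I expect the main obstacle to be precisely the anchoring step: upgrading ``all $C_b$ pairwise meet'' to ``all $C_b$ share a fixed vertex $a^\ast$''. Pairwise-intersecting families of $(t-2)$-cliques need not have a common element in general, and the only thing ruling out the bad (sunflower-type) configurations is $\omega(G[A])=t-2$; making this extremal/exchange argument rigorous, and checking that it degenerates correctly on the $C_5$-type extremal graphs where $\delta=2(t-2)$ is attained, is where the real work lies. Once $a^\ast$ and the stability of the relevant common neighbourhoods are in hand, the contradiction via an unsaturated internal non-edge is short.
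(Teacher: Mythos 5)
The paper never proves this statement at all---it is Hajnal's 1965 theorem, imported as a black box via \cite{Hajnal}---so your attempt can only be judged on its own merits, and on those merits it has a genuine gap (two, in fact). The first is the step you yourself flag: upgrading ``the cliques $C_b$ pairwise intersect'' to ``they all share a vertex $a^\ast$.'' This is not a deferrable exchange argument; in your approach it is the entire content of the theorem. Concretely, since $\omega(G[A])=t-2$, the $(t-2)$-cliques of $G[A]$ are exactly the maximum independent sets of the complement of $G[A]$, and what you need---that in a graph $H$ on at most $2\alpha(H)-1$ vertices all maximum independent sets have a common element---is precisely the complementary form of Hajnal's intersection theorem (the intersection of all maximum independent sets of $H$ has at least $2\alpha(H)-|V(H)|$ vertices), a nontrivial result from the very paper being cited. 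So the proposal reduces the quoted theorem to an unproven theorem of comparable depth, with only the remark that ``this is where the real work lies.''

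Second, even granting the anchor $a^\ast$, your endgame does not close. To contradict saturation of the non-edge $a^\ast a'$ you must show that no $(t-2)$-clique $D$ lies in $N(a^\ast)\cap N(a')$. Note first that $v\in N(a^\ast)\cap N(a')$ and that excluding $v$ from $D$, as well as improving the trivial bound $|D\cap A|\le t-3$ to $|D\cap A|\le t-4$, both require the anchor property, not just $K_{t-1}$-freeness of $G[A]$; with only $|D\cap A|\le t-3$ and a stable $B$-part your count gives $|D|\le t-2$, which is no contradiction. More seriously, your reason that $D\cap B$ is stable---its vertices lie in the common neighbourhood of ``a clique containing $a^\ast$''---is invalid: the stability fact you proved applies to the common neighbourhood of one \emph{fixed} $(t-2)$-clique, whereas two vertices $b_1,b_2\in D\cap B$ are only guaranteed to be complete to cliques $C_{b_1}\neq C_{b_2}$ sharing the single vertex $a^\ast$, and nothing you have shown forbids the edge $b_1b_2$. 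Even using the full common core $S$ of all maximum cliques of $G[A]$ (of size at least $2(t-2)-d$, where $d=\delta(G)$), the largest clique one can assemble this way is $S\cup(D\cap B)$, and $|S|+2\ge t$ only when $d\le t-2$; for $t-1\le d\le 2t-5$ no contradiction appears. The case $t=4$, where anchoring is trivial and all $C_b$ are forced to coincide, is deceptively easy; for general $t$ both the anchoring step and the final contradiction need substantive arguments that the proposal does not contain.
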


It is simple to check that  $r(K_s, K_t)\ge (s-1)(t-1)+1$ for all $s, t\ge2$.  This, together with a construction of Burr,  Erd\H{o}s,   Faudree and   Schelp~\cite{BEFS}, leads to the following lemma. \medskip

\begin{lem}[Burr,  Erd\H{o}s,   Faudree and   Schelp~\cite{BEFS}]\label{l:lowbd} For all $s, t\ge2$, 
\[r(K_s, K_t)\ge r(s-1, t)+2t-3\ge (s-2)(t-1)+1+2t-3=s(t-1).\]

\end{lem}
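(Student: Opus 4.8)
The plan is to separate the statement into two ingredients: the elementary Ramsey lower bound behind the middle inequality, and the recursive inequality $r(K_s,K_t)\ge r(K_{s-1},K_t)+2t-3$ furnished by the construction of Burr, Erd\H{o}s, Faudree and Schelp. Once both are in hand, the displayed chain is pure arithmetic.

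First I would record the elementary bound $r(K_a,K_b)\ge (a-1)(b-1)+1$ for all $a,b\ge 2$ by exhibiting a critical coloring of $K_{(a-1)(b-1)}$. Partition the vertex set into $b-1$ blocks $V_1,\dots,V_{b-1}$, each of size $a-1$, and color an edge red if its ends lie in a common block and blue otherwise. Every red clique lies inside a single block, hence has at most $a-1$ vertices, so there is no red $K_a$; every blue clique meets each block in at most one vertex, hence has at most $b-1$ vertices, so there is no blue $K_b$. Taking $(a,b)=(s-1,t)$ gives $r(K_{s-1},K_t)\ge (s-2)(t-1)+1$, which supplies the middle inequality, while the final identity is $(s-2)(t-1)+1+(2t-3)=(s-2)(t-1)+2(t-1)=s(t-1)$.

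The substantive step is $r(K_s,K_t)\ge r(K_{s-1},K_t)+2t-3$. Here I would start from a critical coloring of $K_m$ on $m=r(K_{s-1},K_t)-1$ vertices, so that the red graph $R$ has clique number $\omega(R)\le s-2$ (no red $K_{s-1}$) and independence number $\alpha(R)\le t-1$ (no blue $K_t$), and adjoin a set $W$ of $2t-3$ new vertices, together with a coloring of all new edges, so that the new red graph $R^+$ satisfies $\omega(R^+)\le s-1$ and $\alpha(R^+)\le t-1$; any such $R^+$ certifies $r(K_s,K_t)>m+2t-3$. A natural first move disposes of $t-1$ of the new vertices: let $D$ be a red-independent set (equivalently a blue clique) of size $t-1$ that is red-complete to $V(K_m)$. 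Then any red clique meets $D$ in at most one vertex, and $D$ is blue-anti-complete to the old vertices, so the red graph on $V(K_m)\cup D$ has $\omega\le (s-2)+1=s-1$ and $\alpha\le t-1$; this already yields $r(K_s,K_t)\ge r(K_{s-1},K_t)+t-1$.

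The main obstacle is placing the remaining $t-2$ vertices, and I expect this to be the crux. A uniform attachment provably fails: a further vertex that is blue-complete to any maximum blue clique of $K_m$ creates a blue $K_t$, whereas a vertex that is red-complete to a maximum red clique and also red-adjacent to a vertex of $D$ creates a red $K_s$, and, treating the base coloring as a black box, these two requirements collide. Consequently one must exploit the internal structure of the extremal coloring on $K_m$, distributing the red- and blue-neighborhoods of the $t-2$ extra vertices across the maximum monochromatic cliques so that none of them is completed; this is exactly the construction of Burr, Erd\H{o}s, Faudree and Schelp, which I would invoke for the inequality $r(K_s,K_t)\ge r(K_{s-1},K_t)+2t-3$. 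Combining it with $r(K_{s-1},K_t)\ge (s-2)(t-1)+1$ from the second paragraph and the arithmetic above completes the proof.
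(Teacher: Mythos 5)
Your proposal is correct and follows essentially the same route as the paper: the paper likewise treats $r(K_{s-1},K_t)\ge (s-2)(t-1)+1$ as the elementary ``simple to check'' bound (your block coloring is the standard witness) and invokes the Burr--Erd\H{o}s--Faudree--Schelp construction as a citation for the substantive inequality $r(K_s,K_t)\ge r(K_{s-1},K_t)+2t-3$, after which the chain is arithmetic. Your partial construction attaching a blue $(t-1)$-clique red-complete to the base coloring is a correct (if unneeded) bonus, and your decision to cite BEFS for the remaining $t-2$ vertices matches exactly what the paper does.
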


In this paper we   first  establish some   structural properties of   $(K_{t_1}, \ldots, K_{t_k})$-co-critical graphs.

\begin{lem}\label{lem:blue}
Let $G$ be a $(K_{t_1}, \ldots, K_{t_k})$-co-critical graph, where  $k\ge2$ and $ t_k\ge\cdots\ge t_1\ge3$ are integers. Let $\tau : E(G) \to [k]$ be a critical coloring of $G$ with color classes $E_1, \ldots, E_k$. Let $x\in V(G)$  and let $A_\ell:=N_\ell(x)$  for each $\ell\in[k]$. Then the following hold. 

\begin{enumerate}[(a)]
\item For each $\ell\in[k]$, $\Delta(G_\ell)\le |G|-2$ and $\omega(G_\ell[A_\ell])\le t_\ell-2$.

\item  For each $\ell\in[k]$,  every vertex in $V(G)\less N[x]$ is $\ell$-complete to a $(t_\ell-2)$-clique in  $G_\ell[A_\ell]$, and   $\omega(G_\ell[A_\ell])= t_\ell-2$.
 
\item Suppose $\tau$ is chosen such that    $|E_k|$ is maximum among all critical colorings of $G$, say the color $k$ is blue. 
\begin{enumerate}

\item[(c$_1$)]   If $A_\ell$ is blue-complete to $A_k$ for some $\ell\in[k-1]$, then    $ G_\ell[A_k]$ contains at least $t_k-2$  disjoint copies of $K_{t_\ell-1}$,  $ G_k[A_k]$ contains at least $t_\ell-1$  disjoint copies of $K_{t_k-2}$,  and so $|A_k|\ge (t_\ell-1)(t_k-2)$.

 \item[(c$_2$)] If $k=2$ and  $|A_1|=t_1-2$, then $ A_1$ is   blue-complete to $A_k$ and  $|A_2|\ge (t_1-1)(t_2-2)+1$.
\end{enumerate} 
\item Suppose $\tau$ is chosen such that    $|E_1|$ is minimum among all critical colorings of $G$.  If $k\ge3$, then $G\less E_1$ is $(K_{t_2}, \ldots, K_{t_k})$-co-critical.
 \end{enumerate}
\end{lem}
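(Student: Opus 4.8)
The engine behind the whole lemma is the following observation, valid for any critical coloring $\tau$: for every non-edge $uv\in E(\overline G)$ and every $\ell\in[k]$, the graph $G_\ell+uv$ contains a copy of $K_{t_\ell}$. Indeed, since $G+uv\rightarrow(K_{t_1},\dots,K_{t_k})$, the coloring obtained from $\tau$ by assigning color $\ell$ to $uv$ cannot be critical; as only color $\ell$ is affected, a monochromatic $K_{t_\ell}$ must appear, and it must use $uv$ because $G_\ell$ is $K_{t_\ell}$-free. Equivalently, $u$ and $v$ are both $\ell$-complete to a common $(t_\ell-2)$-clique of $G_\ell$. The clique bound in (a) is then immediate: if $G_\ell[A_\ell]$ had a clique $Q$ of size $t_\ell-1$, then $\{x\}\cup Q$ would be a $K_{t_\ell}$ in $G_\ell$, so $\omega(G_\ell[A_\ell])\le t_\ell-2$. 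Part (b) is the observation applied to $x$: since $d(x)\le|G|-2$, there is a vertex $y\in V(G)\less N[x]$, and for any such $y$ the non-edge $xy$ produces, in each color $\ell$, a $(t_\ell-2)$-clique $Q\subseteq A_\ell\cap N_\ell(y)$; thus $y$ is $\ell$-complete to a $(t_\ell-2)$-clique of $G_\ell[A_\ell]$, and together with the clique bound this forces $\omega(G_\ell[A_\ell])=t_\ell-2$.

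The remaining part of (a), $\Delta(G_\ell)\le|G|-2$, is the first place co-criticality is used through a recoloring (exchange) argument, and it is included precisely so that \cref{t:Hajnal} can be applied to the saturated color class in part (c). Suppose some $w$ is $\ell$-complete to $V(G)\less w$, and fix a non-neighbor $y$ of $x$. First, every $(t_\ell-2)$-clique $Q\subseteq A_\ell$ to which $y$ is $\ell$-complete must contain $w$: otherwise $\{w\}\cup Q$ would be a $(t_\ell-1)$-clique in $G_\ell[A_\ell]$, contradicting the bound just proved. Now build a coloring $\tau'$ of $G+xy$ from $\tau$ by recoloring the edge $wx$ from $\ell$ to some other color $\ell'$ and coloring the new edge $xy$ with $\ell$. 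In $\tau'$ the vertex $w$ has exactly one incident $\ell'$-edge, so (using $t_{\ell'}\ge3$) no new $K_{t_{\ell'}}$ can use it; and a new $K_{t_\ell}$ must use $xy$ and would require a $(t_\ell-2)$-clique in $(A_\ell\less w)\cap N_\ell(y)$, which cannot exist by the previous sentence. All other color classes are unchanged, so $\tau'$ is a critical coloring of $G+xy$, contradicting $G+xy\rightarrow(K_{t_1},\dots,K_{t_k})$.

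For part (c) we first record that when $|E_k|$ is maximum, $G_k$ is $K_{t_k}$-saturated: a blue non-edge $e$ is either a non-edge of $G$ (where the observation gives $G_k+e\supseteq K_{t_k}$) or an edge of some $G_j$, $j\ne k$ (where recoloring $e$ blue must destroy criticality by maximality, and only blue can gain a clique, so again $G_k+e\supseteq K_{t_k}$); with $\Delta(G_k)\le|G|-2$ from (a), \cref{t:Hajnal} yields $\delta(G_k)\ge2(t_k-2)$. For (c$_1$) the key configuration is $S:=\{x\}\cup Q_\ell$, where $Q_\ell$ is a $(t_\ell-2)$-clique in $G_\ell[A_\ell]$ from (b): $S$ has $t_\ell-1$ vertices, is blue-independent (its internal edges are color $\ell$), and is blue-complete to $A_k$ (as $x$ is blue-complete to $A_k=N_k(x)$ and $A_\ell$ is blue-complete to $A_k$ by hypothesis). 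Since the blue graph is $K_{t_k}$-free while $\omega(G_k[A_k])=t_k-2$, any single vertex of $S$ together with a blue $(t_k-2)$-clique of $A_k$ already gives a blue $K_{t_k-1}$; the content of (c$_1$) is that this blue-independent apex of size $t_\ell-1$, combined with the $K_{t_k}$-saturation of $G_k$, forces $A_k$ to contain $t_\ell-1$ disjoint blue copies of $K_{t_k-2}$ and, dually, $t_k-2$ disjoint color-$\ell$ copies of $K_{t_\ell-1}$, whence $|A_k|\ge(t_\ell-1)(t_k-2)$. \textbf{This packing step is the main obstacle:} one must show that if fewer than $t_\ell-1$ disjoint blue $(t_k-2)$-cliques existed, then the uncovered part of $A_k$ (whose blue clique number would then be at most $t_k-3$) could be combined with the $t_\ell-1$ apex vertices and the saturation cliques to assemble a forbidden blue $K_{t_k}$, and the color-$\ell$ packing must be read off from the transversal (``grid'') structure that avoiding a blue $K_{t_k}$ imposes on these cliques. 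Statement (c$_2$) is the case $k=2$, $|A_1|=t_1-2$, so that $A_1$ is itself a red $(t_1-2)$-clique and $\{x\}\cup A_1$ a red $K_{t_1-1}$; here one first checks, using the observation together with the maximality of $|E_2|$ and this red clique, that $A_1$ is blue-complete to $A_2$, and then (c$_1$) gives $|A_2|\ge(t_1-1)(t_2-2)$, the extra $+1$ coming from the tight interaction of the red clique with the saturation of $G_k$.

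Finally, part (d) is a clean extension argument needing no packing. The restriction of $\tau$ to colors $2,\dots,k$ is a critical coloring of $G\less E_1$, so $G\less E_1\not\rightarrow(K_{t_2},\dots,K_{t_k})$. A non-edge of $G\less E_1$ is either a non-edge of $G$ or an edge of $E_1$. If $e\in E(\overline G)$ and some coloring $\sigma$ of $(G\less E_1)+e$ in colors $2,\dots,k$ were critical, then coloring all of $E_1$ with color $1$ and keeping $\sigma$ elsewhere would be a critical coloring of $G+e$ (color $1$ is $G_1$, still $K_{t_1}$-free), contradicting $G+e\rightarrow(K_{t_1},\dots,K_{t_k})$. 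If instead $e\in E_1$ and $(G\less E_1)+e$ had a critical coloring $\sigma$ in colors $2,\dots,k$, then coloring $E_1\less e$ with color $1$ and using $\sigma$ on the rest would give a critical coloring of $G$ whose color-$1$ class has only $|E_1|-1$ edges, contradicting the minimality of $|E_1|$. In both cases $(G\less E_1)+e\rightarrow(K_{t_2},\dots,K_{t_k})$, so $G\less E_1$ is $(K_{t_2},\dots,K_{t_k})$-co-critical. Thus the only genuine difficulty in the lemma is the disjoint-clique packing in (c$_1$)/(c$_2$); everything else follows from the add-an-edge observation, a single exchange argument, and Hajnal's theorem.
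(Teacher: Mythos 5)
Your parts (a), (b) and (d) are correct. Part (a) is a legitimate variant of the paper's exchange argument (you add the non-edge $xy$ and recolor $wx$; the paper adds a non-edge $yz$ inside $N_j(v)$ and recolors $vy$), and your (b) and (d) are essentially identical to the paper's proofs. The genuine gap is in (c$_1$) and (c$_2$): there you give no proof, only a description of what a proof would have to accomplish, and you say so yourself (``this packing step is the main obstacle''). Since that packing step is the entire substance of part (c), the proposal is incomplete exactly where the lemma is hard.

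Moreover, the route you sketch for (c$_1$) cannot be made to work. You propose that if $A_k$ contained fewer than $t_\ell-1$ disjoint blue $(t_k-2)$-cliques, the uncovered part of $A_k$ together with the apex set $S=\{x\}\cup Q_\ell$ could be used to ``assemble a forbidden blue $K_{t_k}$.'' But $S$ is blue-independent, so any blue clique contains at most one vertex of $S$, and one apex vertex plus a blue clique of size at most $t_k-3$ (all the uncovered part can offer) gives at most a blue $K_{t_k-2}$; no contradiction with the $K_{t_k}$-freeness of $G_k$ is available this way. The paper's contradiction is instead with co-criticality, via recoloring: fix $y\notin N[x]$, take maximum collections $B_1,\ldots,B_p$ of pairwise disjoint blue $(t_k-2)$-cliques and $C_1,\ldots,C_q$ of pairwise disjoint $(t_\ell-1)$-cliques of $G_\ell[A_k]$; if $p\le t_\ell-2$, color $xy$ blue and recolor every edge from $x$ to $\bigcup_i B_i$ with color $\ell$. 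No blue $K_{t_k}$ arises because $A_k\setminus\bigcup_i B_i$ has no blue $(t_k-2)$-clique (maximality of $p$), and no $K_{t_\ell}$ in color $\ell$ arises because each $B_i$ is a blue clique, so any $\ell$-clique inside $\bigcup_i B_i$ has at most $p\le t_\ell-2$ vertices, while $A_\ell$ sends no $\ell$-edges into $A_k$. This yields a critical coloring of $G+xy$, contradicting co-criticality; the case $q\le t_k-3$ is symmetric, and a similar recoloring (not your $K_{t_k-1}$ observation) is needed even for the preliminary fact $\omega(G_\ell[A_k])=t_\ell-1$. Note also that $K_{t_k}$-saturation and \cref{t:Hajnal}, which you invoke, play no role in the lemma; they enter only later, in \cref{mindegK}. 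Likewise for (c$_2$): blue-completeness of $A_1$ to $A_2$ requires observing that $N_b(a)\subseteq A_2$ for each $a\in A_1$ (because $A_1$ is a red clique that is red-complete to $\{x\}\cup U$ when $|A_1|=t_1-2$), and the extra $+1$ comes not from ``saturation'' but from the chromatic-number machinery: if $|A_2|=(t_1-1)(t_2-2)$, the cliques $B_1,\ldots,B_p$ partition $A_2$ with $p=t_1-1$; one shows $U$ is not blue-complete to $A_2$ (otherwise $U$ is a stable set and $\chi(G)\le 1+d(x)<r(t_1,t_2)-1$, contradicting \cref{l:chi} via \cref{l:lowbd}), and then a vertex $z\in U$ not blue-complete to some $B_i$ yields a critical coloring of $G+xz$ by coloring $xz$ blue and recoloring the edges from $x$ to $A_2\setminus B_i$ red. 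None of these mechanisms appear in your proposal.
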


We prove \cref{lem:blue} in Section~\ref{s:blue}. Using \cref{lem:blue}, we first extend \cref{t:K3K3}  to  $(K_{t_1}, \ldots, K_{t_k})$-co-critical graphs. We  prove \cref{mindegK} here as its proof  is short.

\begin{thm}\label{mindegK}
 For all integers $k\ge2$  and  $t_k\ge \cdots\ge t_1\ge 3$, every $(K_{t_1}, \ldots,K_{t_k})$-co-critical graph has minimum degree at least $t_k-2k-1+\sum_{i=1}^k t_i$. 
\end{thm}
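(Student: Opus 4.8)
The plan is to induct on the number $k$ of colors, peeling off one color at a time via part (d) of \cref{lem:blue}. In every case I would fix a vertex $x$ of minimum degree; since a co-critical graph is non-complete we have $\delta(G)\le |G|-2$, so $x$ satisfies the hypothesis $d(x)\le |G|-2$ of \cref{lem:blue}, and $d(x)=\sum_{\ell=1}^{k}|A_\ell|$ where $A_\ell:=N_\ell(x)$. Part (b) immediately gives $|A_\ell|\ge \omega(G_\ell[A_\ell])=t_\ell-2$ for every $\ell$, and choosing a coloring that maximizes $|E_k|$ makes $G_k$ a $K_{t_k}$-saturated graph with $\Delta(G_k)\le|G|-2$ by part (a), whence Hajnal's \cref{t:Hajnal} yields $|A_k|\ge 2(t_k-2)$. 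Summing these crude bounds produces $\delta(G)\ge \sum_{i=1}^{k}t_i+t_k-2k-2$, which falls short of the target $t_k-2k-1+\sum_{i=1}^k t_i$ by exactly one; recovering this last unit is the crux, and the induction is designed to quarantine it in the base case.

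For the base case $k=2$ I would run a dichotomy on $|A_1|$, using the coloring that maximizes $|E_2|$. If $|A_1|\ge t_1-1$, then together with $|A_2|\ge 2(t_2-2)$ from Hajnal we already get $\delta(G)\ge (t_1-1)+(2t_2-4)=t_1+2t_2-5$, which is the desired bound. Otherwise $|A_1|=t_1-2$, and this is precisely the configuration handled by part (c$_2$), which gives the far stronger $|A_2|\ge (t_1-1)(t_2-2)+1$; then $\delta(G)\ge (t_1-2)+(t_1-1)(t_2-2)+1=(t_1-1)(t_2-1)$, and a one-line check that $(t_1-1)(t_2-1)-(t_1+2t_2-5)=(t_1-3)(t_2-2)\ge 0$ closes the base case.

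For the inductive step $k\ge 3$ I would instead choose a critical coloring minimizing $|E_1|$, so that part (d) guarantees $G':=G\less E_1$ is $(K_{t_2},\dots,K_{t_k})$-co-critical. Applying the induction hypothesis to $G'$ (with $k-1$ colors and largest index still $k$) gives $\delta(G')\ge t_k-2(k-1)-1+\sum_{i=2}^{k}t_i$. Since $G'$ is a spanning subgraph of $G$ with $d_{G'}(x)=d_G(x)-|A_1|$, and since part (b) still supplies $|A_1|\ge t_1-2$ for this coloring, I would conclude $\delta(G)=d_{G'}(x)+|A_1|\ge \delta(G')+(t_1-2)$, and a direct computation shows $\delta(G')+(t_1-2)=t_k-2k-1+\sum_{i=1}^{k}t_i$, as required.

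The main obstacle is exactly the off-by-one deficit noted above: Hajnal's theorem and part (b) by themselves are one short, and there is no single-step way to close the gap for general $k$ because part (c$_2$) is available only when $k=2$. The induction circumvents this by isolating the tight case in the base step, where (c$_2$) applies, while each inductive peel contributes only the routine term $t_1-2$ from part (b). The one point I would verify carefully is that the coloring used in the inductive step—minimizing $|E_1|$, as required by part (d)—is also a coloring for which part (b) holds; this is fine because parts (a) and (b) are asserted for \emph{every} critical coloring, independently of any extremal choice.
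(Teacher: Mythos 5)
Your proof is correct and follows essentially the same route as the paper: induction on $k$ with the base case $k=2$ handled by Hajnal's theorem together with \cref{lem:blue}(a,b,c$_2$) (splitting on whether $|A_1|=t_1-2$), and the inductive step peeling off color $1$ via a coloring minimizing $|E_1|$ and \cref{lem:blue}(d), gaining $t_1-2$ from part (b). The only differences are cosmetic, e.g.\ your base-case arithmetic via $(t_1-1)(t_2-1)$ versus the paper's bound $(t_1-1)(t_2-2)+1\ge 2t_2-3$, and your explicit remark that parts (a) and (b) hold for every critical coloring, which the paper leaves implicit.
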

\begin{proof} Let $G$ be a  $(K_{t_1}, \ldots,K_{t_k})$-co-critical graph. Among all critical colorings of $G$, let $\tau:E(G)\to [k]$   be a critical coloring of $G$ with color classes $E_1, \ldots, E_k$   such that $|E_1|$ is minimum. 
We apply  induction on $k$. Assume $k=2$.    We  may assume that color 1 is red and color 2 is blue. Note that $|E_b|$ is maximum among all critical colorings of $G$ when $k=2$. By the choice of $\tau$, $G_b$ is $K_{t_2}$-saturated.  Then   $\Delta(G_b)\le |G|-2$ by  \cref{lem:blue}(a);   $\delta(G_b)\ge 2t_2-4$ by \cref{t:Hajnal}; and  $\delta(G_r)\ge t_1-2$ by \cref{lem:blue}(b). 
 Let $x\in V(G)$, and let  $A:=N_r(x)$ and $B:=N_b(x)$.    Then   $|A|\ge t_1-2$ and $|B|\ge 2t_2-4$. Furthermore,  if  $|A|= t_1-2$, then  by  \cref{lem:blue}(c$_2$),     $|B|\ge (t_1-1)(t_2-2)+1\ge 2t_2-3$ because $t_1\ge3$. 
It follows that $ d(x)=|A|+|B|\ge  2t_2+t_1-5$. 
We may assume that $k\ge3$, and the statement holds for  all $(K_{m_1}, \ldots,K_{m_{k-1}})$-co-critical graphs, where $m_{k-1}\ge \cdots\ge m_1\ge3$.   Let $G':= G\less  E_1$ and   $G'':= G\less \bigcup_{i=2}^{k}E_i$.  By \cref{lem:blue}(d),  $G'$ is $(K_{t_2}, \ldots,K_{t_k})$-co-critical.  By the induction hypothesis, $\delta(G')\ge t_k-2(k-1)-1+\sum_{i=2}^{k} t_i$. For each pair of vertices $u, v\in V(G)$ with $uv\notin E(G)$,     we see that   $u$ and $v$ share at least $t_1-2$ neighbors in common in $G''$ by \cref{lem:blue}(b) applied to $u$ and $\ell=1$. It follows that $\delta(G)\ge \delta(G')+t_1-2\ge t_k-2k-1+\sum_{i=1}^k t_i$, as desired. 
\end{proof}

We end the paper by providing  more evidence to support  \cref{question}.  We  establish  the sharp bound for the minimum degree of $(K_3, K_4)$-co-critical graphs. We  prove  \cref{K3K4} in Section~\ref{sec:K3K4}.

\begin{restatable}{thm}{threeclaw}\label{K3K4}
Every $(K_3, K_4)$-co-critical graph on $n\ge 9$ vertices has  minimum degree at least seven. The bound is sharp for all $n\ge 9$.
\end{restatable}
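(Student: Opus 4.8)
The plan is to prove the two assertions separately: first the sharpness, which is short, and then the lower bound $\delta(G)\ge 7$, which is the substantive part.

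For sharpness, for each $n\ge 9$ I would take $G=K_7+\overline K_{n-7}$, the graph considered by Hanson and Toft with $r=r(K_3,K_4)=9$ and $r-2=7$. Its minimum degree is exactly $7$, attained by the vertices of $\overline K_{n-7}$. To see that it is $(K_3,K_4)$-co-critical, I would fix a critical $2$-coloring of $K_8$ (one exists since $r(K_3,K_4)=9$), identify a $K_7\subseteq K_8$ with the clique part of $G$, and color each edge from a vertex of $\overline K_{n-7}$ to this $K_7$ according to the remaining vertex of $K_8$. Since the vertices of $\overline K_{n-7}$ are pairwise non-adjacent, every clique of $G$ meets $\overline K_{n-7}$ in at most one vertex, so this is a critical coloring and $G\nrightarrow(K_3,K_4)$. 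For any non-edge $e=z_1z_2$ inside $\overline K_{n-7}$, the set $\{z_1,z_2\}$ together with the $K_7$ induces a $K_9$ in $G+e$, and $K_9\rightarrow(K_3,K_4)$, so $G+e\rightarrow(K_3,K_4)$.

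For the lower bound, \cref{mindegK} (with $k=2$, $t_1=3$, $t_2=4$) already gives $\delta(G)\ge 6$, so I would suppose for contradiction that some $x\in V(G)$ has $d(x)=6$. Among all critical colorings I fix $\tau$ with $|E_b|$ maximum (blue corresponding to $K_4$); then $G_b$ is $K_4$-saturated, $G_r$ is triangle-free, and by \cref{t:Hajnal} together with \cref{lem:blue}(a) one gets $\delta(G_b)\ge 4$. The engine is co-criticality read locally: for every non-edge $uv$, coloring $uv$ red must create a red $K_3$ and coloring it blue must create a blue $K_4$, so (R) $u,v$ have a common red neighbor, and (B) $u,v$ are both blue-adjacent to the two ends of some blue edge. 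Since $|N_b(x)|\ge\delta(G_b)\ge 4$ and $|N_r(x)|\ge 1$ by \cref{lem:blue}(b), the only possibilities are $(|N_r(x)|,|N_b(x)|)\in\{(1,5),(2,4)\}$, and I would rule out each. In the case $(1,5)$, let $y$ be the unique red neighbor of $x$; by \cref{lem:blue}(c$_2$) it is blue-complete to $N_b(x)$. Applying (R) to each non-neighbor $z$ of $x$ forces $z$ to be red-adjacent to the only vertex of $N_r(x)$, namely $y$; since $n\ge 9$ provides at least two such $z$, the vertex $y$ becomes universal, with $N_r(y)$ a red-independent set of size $n-6$ and $N_b(y)=N_b(x)$. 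I would then exploit that $G_b[N_b(y)]$ is triangle-free while, by $K_4$-saturation, every vertex of the large red-independent set $N_r(y)$ must be blue-complete to a blue edge of $N_b(y)$, each such ``bucket'' being blue-independent (else two of its members plus the blue edge form a blue $K_4$); combined with $\delta(G_b)\ge 4$ this overdetermines the five vertices of $N_b(y)$ and yields a contradiction. In the case $(2,4)$, $x$ meets the Hajnal bound $\delta(G_b)=4$ and $G_b[N_b(x)]$ is a triangle-free graph on four vertices with at least one blue edge; here I would analyze how the $n-5$ vertices not blue-adjacent to $x$ distribute, via (B), among the at most four blue edges of $G_b[N_b(x)]$, use blue-independence of each bucket together with $\delta(G_b)\ge 4$, and track the red common neighbors supplied by (R) through the two vertices of $N_r(x)$ (whose red neighborhoods are independent because $G_r$ is triangle-free) to again produce a blue $K_4$, a red $K_3$, or a recoloring that increases $|E_b|$.

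The main obstacle is the case $(2,4)$: because $x$ attains Hajnal's bound exactly, the blue structure is not pinned down by this single vertex, and one must combine the saturation of $G_b$, the condition $\delta(G_b)\ge 4$, and both co-criticality conditions simultaneously across all $n-7$ non-neighbors of $x$. I expect the cleanest route to be a maximality argument rather than a purely extremal count: showing that in any surviving configuration some red edge can be recolored blue (or a pair of edges swapped) without creating a blue $K_4$, which would contradict the maximality of $|E_b|$ in the choice of $\tau$.
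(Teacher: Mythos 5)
Your sharpness argument is fine: it is exactly the Hanson--Toft observation already cited in the paper's introduction, and $K_7+\overline{K}_{n-7}$ has minimum degree $7$. Your setup for the lower bound also coincides with the paper's: $\delta(G)\ge 6$ from \cref{mindegK}, a critical coloring $\tau$ maximizing $|E_b|$ so that $G_b$ is $K_4$-saturated, $\delta(G_b)\ge 4$ from \cref{t:Hajnal} and \cref{lem:blue}(a), and the reduction to the two cases $(|N_r(x)|,|N_b(x)|)\in\{(1,5),(2,4)\}$.

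The gap is that neither case is actually closed, and the decisive tool you are missing is the global chromatic-number bound that the paper's proof is built on. Since $r(K_3,K_4)=9$ and a vertex of degree $6$ cannot exist in a complete $8$-partite graph on $n\ge 9$ vertices (its part would have $n-6$ vertices, leaving only $6$ vertices for the other seven parts), \cref{l:chi} gives $\chi(G)\ge 9$, hence $\chi(G[U])\ge 3$ for $U=V(G)\setminus N[x]$; every contradiction in the paper is obtained against this bound, by splitting $U$ into two stable sets or exhibiting an explicit proper $8$-coloring of $G$. Your purely local constraints cannot substitute for it. In case $(1,5)$, the facts you list --- $G[U]$ red-edge-free, every $u\in U$ blue-complete to a blue edge of $G_b[B]$, each bucket blue-independent, $\delta(G_b)\ge 4$ --- are simultaneously satisfiable (take $G_b[B]=C_5$, distribute $U$ over the five buckets, and place blue edges between buckets; $G[U]$ can then even contain blue triangles), so nothing is ``overdetermined'' and no contradiction follows. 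What the paper actually does there is first upgrade, via a recoloring of $G+xu$ (color $xu$ blue, recolor one $xb$ red), the conclusion ``blue-complete to a blue edge'' to ``blue-adjacent to at least four of the five vertices of $B$''; only then does a blue matching $\{e_1,e_2\}$ from \cref{lem:blue}(c$_1$) collapse $U$ into two blue-independent, hence stable, sets, contradicting $\chi(G[U])\ge 3$. Case $(2,4)$, which you yourself flag as the obstacle, is the bulk of the paper's proof (Claims 6--9 plus a final case analysis on the structure of $G_b[B]\in\{P_4,C_4\}$), and there too the contradictions are explicit proper $8$-colorings rather than the recoloring-to-increase-$|E_b|$ argument you hope for. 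As written, the proposal is a plan whose key steps (``this overdetermines the five vertices\dots'', ``I expect the cleanest route to be a maximality argument'') are unproven, and the route suggested does not appear to close either case without the chromatic bound.
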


  \section{Proof of \cref{lem:blue}}\label{s:blue} 
 
   Let $G,  t_1, \ldots, t_k, x, A_1, \ldots, A_k$ be as given in the statement.  Let $U:=V(G)\less N[x]$. To prove \cref{lem:blue}(a), suppose there exists a vertex  $v \in V(G)$ such that $d_j(v) = |G| - 1$ for some color $j\in[k]$. We may assume that the color $j$ is red. Then there exist   $y,z \in N_j(v)$ such that $yz \notin E(G)$ because  $G$ is not a complete graph. Note that 
$G_j \less v$ is $K_{t_j-1}$-free since $G_j$ is 
$K_{t_j}$-free.   But then we obtain a critical coloring of 
$G+yz$ from $\tau$ by first coloring the edge $yz$ red, and then recoloring the edge $vy$ by a different color, say blue, in $[k]$, a contradiction.  Thus for each $\ell\in [k]$, $\Delta(G_\ell)\le |G|-2$. Since $G_\ell$ is $K_{t_\ell}$-free,   we see that    $\omega(G_\ell[A_\ell]) \le t_\ell-2$.  \medskip

To prove \cref{lem:blue}(b),   for each  $u\in U$,  we see that $G+xu$ admits no critical coloring. For each $\ell\in[k]$, by coloring the edge $xu$ with the color $\ell$, it follows that   $u$  is  $\ell$-complete to a $( t_{\ell}-2)$-clique in $G_\ell[A_\ell]$.   Therefore,  $\omega(G_\ell[A_\ell])= t_\ell-2$ by \cref{lem:blue}(a). \medskip

We  next  prove \cref{lem:blue}(c). Suppose $\tau$ is chosen such that    $|E_k|$ is maximum among all critical colorings of $G$ and the color $k$ is  blue.  To prove \cref{lem:blue}(c$_1$),        let $y\in U$.
We next show that $\omega(G_\ell[A_k])=t_\ell-1$.   Suppose $\omega(G_\ell[A_k])\le t_\ell-2$. Since  $A_\ell$ is blue-complete to $A_k$, we see that  $G+xy$ admits a  critical coloring  obtained from $\tau$ by first coloring the edge $xy$ blue, and then recoloring each edge $xv$ by color $\ell$ for all  $v\in A_k$, a contradiction.  Thus $\omega(G_\ell[A_k])=t_\ell-1$.
By \cref{lem:blue}(b), $\omega(G_k[A_k])=t_k-2$. Let $B_1, \ldots, B_p\subseteq A_k$ be pairwise disjoint $(t_k-2)$-cliques in $G_k[A_k]$ such that   $p\ge1$ is maximum; and let    $C_1, \ldots, C_q\subseteq A_k$ be pairwise disjoint $(t_\ell-1)$-cliques in $G_\ell[A_k]$ such that   $q\ge1$ is maximum. Then $G_k[B^*]$ is  $K_{t_k-2}$-free and $G_\ell[C^*]$ is  $K_{t_\ell-1}$-free, where  $B^*:=A_k\less\bigcup_{i=1}^p B_i$ and $C^*:=A_k\less\bigcup_{j=1}^q C_j$.   Note that for every $(t_k-2)$-clique $K$ in $G_k[A_k]$ with $K\notin\{B_1, \ldots, B_p\}$, we have $K\cap B_i\ne\emptyset$ for some $i\in[p]$; and for every $(t_\ell-1)$-clique $K'$ in $G_\ell[A_k]$ with $K'\notin\{C_1, \ldots, C_q\}$, we have $K'\cap C_j\ne\emptyset$ for some $j\in[q]$. Suppose $p\le t_\ell-2$ or $q\le t_k-3$.   In the first case,   $G+xy$ admits a  critical coloring  obtained from $\tau$ by first coloring the edge $xy$ blue, and then recoloring each edge $xb$ by color $\ell$ for all $b\in B\less B^*$; and in the latter case, $G+xy$ admits a  critical coloring  obtained from $\tau$ by first coloring the edge $xy$ blue, and then recoloring each edge $xc$ by color $\ell$  for all $c\in  C^*$,  a contradiction. Thus  $p\ge t_\ell-1$ and $q\ge t_k-2$, and so $|A_k|\ge (t_\ell-1)(t_k-2)$.  This proves  \cref{lem:blue}(c$_1$). \medskip

To prove \cref{lem:blue}(c$_2$), we may assume color 1 is red.  Suppose  $|A_1|=t_1-2$.  By  \cref{lem:blue}(b),  each vertex in $U$ is red-complete to $A_1$,  $G[A_1]=K_{t_1-2}$ and $G[A_1]$ has no blue edges. Hence  $A_1$ is red-complete to $U$.  Now suppose there exist $a\in A_1$ and  $b\in A_2$   such that $ab\notin E_b$. Then $ab\in E_r$    or $ab\notin E(G)$.   In the formal case, we obtain a critical coloring of $G$ from $\tau$ by recoloring the edge $ab$ blue, contrary to choice of $\tau$; in the latter case, we obtain a critical coloring of $G+ab$ from $\tau$ by  coloring  the edge $ab$ blue, a contradiction. Thus $A_1$ is blue-complete to $A_2$.   By  \cref{lem:blue}(c$_1$), $|A_2|\ge (t_1-1)(t_2-2)$. Suppose $|A_2|= (t_1-1)(t_2-2)$. Then $p=t_1-1\ge2$, where $p$ is defined in the proof of  \cref{lem:blue}(c$_1$). Since  $A_1$ is red-complete to $U$, we see that $G[U]$ has no  red   edges. Moreover, 
 $U$ is not blue-complete to $A_2$, else  
    $G[U]$ has neither red nor blue edges. But then $U$ is a stable set in $G$  and so $\chi(G)\le 1+d(x)=1+(t_1-1)(t_2-2) < r(t_1, t_2)-1$ by \cref{l:lowbd}, contrary to \cref{l:chi}.  
 Let  $z\in U$ such that $z$ is not blue-complete to $B$. We may assume that $z$  is not 
 blue-complete to $B_p$.  
 Then we obtain a critical coloring of $G+xz$ from $\tau$ by first coloring the edge $xz$ blue, and then recoloring  each $xb$ red for all $b\in B\less B_p$,  a contradiction. Thus  $|A_2|\ge (t_1-1)(t_2-2)+1$. This proves  \cref{lem:blue}(c$_2$).\medskip
  
It remains to prove \cref{lem:blue}(d). Assume $k\ge3$.  Let $G':= G\less  E_1$ and   $G'':= G\less \bigcup_{i=2}^{k}E_i$. Then $G''$  is $K_{t_1}$-free.    We next show that   $G'$ is $(K_{t_2}, \ldots,K_{t_k})$-co-critical. Note that $\tau$ restricted to $E(G')$ yields a critical coloring of $G'$. Let $e$ be an edge in the complement of $ G'$. It suffices to show that $G'+e\rightarrow (K_{t_2}, \ldots, K_{t_k})$. Suppose this is false.   Let $\sigma:E(G')\cup\{e\}\to \{2, \ldots, k\}$ be a critical coloring of $G'+e$. Let $\sigma^*$ be obtained from $\sigma$ by coloring edges in $E_1\less\{e\}$ by   color $1$. Then $\sigma^*$ is   a critical coloring of $G+e$ as   $G''$  is $K_{t_1}$-free.   Since  $G$ is  $(K_{t_1}, \ldots,K_{t_k})$-co-critical, we see that   $e\in E_1$. But then $\sigma^*$ is a critical coloring of $G$ with  fewer than $|E_1|$ edges colored by color 1,   contrary to the choice of $\tau$. This proves that  $G'$ is $(K_{t_2}, \ldots,K_{t_k})$-co-critical, as desired.\medskip
  
This completes the proof of \cref{lem:blue}.

\section{Proof of Theorem~\ref{K3K4}}\label{sec:K3K4}

In this section we prove  \cref{K3K4}, which we restate here for convenience.\threeclaw*

\begin{proof} Let $G$ be a $(K_3, K_4)$-co-critical graph on $n\ge9$ vertices. By \cref{mindegK}, $\delta(G)\ge 6$. Suppose there exists a vertex $x\in V(G)$ such that $d(x)=6$.  Among all critical colorings of $G$, let $\tau:E(G)\to$ \{red, blue\} be a critical coloring of $G$ with color classes $E_r$ and $E_b$ such that   $|E_b|$ is maximum.   By the choice of $\tau$, $G_b$ is $K_4$-saturated and $G_r$ is $K_3$-free.  Then   $\Delta(G_b)\le |G|-2$ by  \cref{lem:blue}(a);   $\delta(G_b)\ge 4$ by \cref{t:Hajnal}. Let $U:=V(G)\less N[x]$, $A:=N_r(x)$ and $B:=N_b(x)$. 
  Then $|A|+| B|=6$. Note that $|A|\ge1$ by \cref{lem:blue}(b) and $|B|\ge4$. Thus $1\le |A|\le2$ and $4\le |B|\le5$. 
 
  Let $ A:=\{a_1\}$ if $|A|=1$ and $ A:=\{a_1, a_2\}$ if $|A|=2$. Let    \[U_1:=\{v\in U\mid v \text{ is red-adjacent  to } a_1\},\]
    and let $U_2:=  U\less  U_1  $. 
  Then   $U_1=U$ if $A=\{a_1\}$ by  \cref{lem:blue}(b); and  $a_2$ is red-complete to  $  U_2$ and not red-adjacent to any vertex in $U_1$ if $A=\{a_1, a_2\}$.  Note that $G[U_i]$ has no red edges for each $i\in[2]$. Let  $B:=\{ b_1,  \ldots,  b_{|B|}\}$.   
   By \cref{lem:blue}(b),  each vertex in $U$ is blue-complete to an edge in $ G_b[B]$. Thus $ G_b[B]$ has at least one edge.   We next prove several claims.   \bigskip

\setcounter{counter}{0}

\noindent {\bf Claim\refstepcounter{counter}\label{A}  \arabic{counter}.}
 Each  vertex in $A$ is blue-complete to an edge in  $G_b[B]$.  
 
 \begin{proof} Suppose there exists a  vertex  $a\in A$ such that $a$ is not blue-complete to any edge in  $G_b[B]$. Then $G$ admits a critical coloring obtained from $\tau$ by recoloring the edge $xa$ blue, which contradicts  the minimality of $|E_r|$. 
 \end{proof}
 
\noindent {\bf Claim\refstepcounter{counter}\label{noB}  \arabic{counter}.}
 No vertex in $B$ is incident to all edges in $G_b[B]$.  
 
 \begin{proof}
 Suppose there exists  a vertex  $b\in B$ such that $G_b[B\less b]$  has no edges.  By Claim~\ref{A},  $b$ is blue-complete to $A$. Let $u\in U$. Then $G+xu$ admits  a critical coloring obtained from $\tau$ by first coloring the edge $xu$ blue and then recoloring the edge $xb$ red, a contradiction. 
  \end{proof}

  \noindent {\bf Claim\refstepcounter{counter}\label{chiU}  \arabic{counter}.}
 $\chi(G)\ge9$ and so  $\chi(G[U])\ge3$. 
 
\begin{proof} Since $d(x)=6$, we see that   $G$ is not a complete $8$-partite graph. By \cref{l:chi},  $\chi(G)\ge r(K_3,K_4)=9$.  Note that $|U|=n-7\ge 2$ and  $\chi(G)\le \chi(G[U])+d(x)$. Thus  $\chi(G[U])\ge3$.
\end{proof}

  \noindent {\bf Claim\refstepcounter{counter}\label{note}  \arabic{counter}.}
  For each $e\in E(G_b[B])$, some vertex in $ U$ is  not blue-complete to $e$.
  
  \begin{proof} Suppose there exists an edge  $e $ in $G_b[B]$ such that      every vertex in $ U$   is  blue-complete to $e$. Then $G[U]$ has no blue edges. Thus   $U_1$ and $U_2$ are stable  sets in $G$, and so  $\chi(G[U])\le2$, which contradicts  Claim~\ref{chiU}. 
  \end{proof}
  
  \noindent {\bf Claim\refstepcounter{counter}\label{UB}  \arabic{counter}.}
   $U$ is not blue-complete to $B$ and  $G_b[B]$ has at least two edges.  

 \begin{proof}  Suppose $U$ is  blue-complete to $B$. Since $G_b$ is $K_4$-free, we see that 
  $G[U]$ has no blue edges. Thus     $U_1$ and $U_2$ are stable  sets in $G$, and so   $\chi(G[U])\le2$, contrary to Claim~\ref{chiU}. This proves that  $U$ is not blue-complete to $B$. By Claim~\ref{note} and \cref{lem:blue}(b),  $ G_b[B]$ has at least one edge. 
 \end{proof}

 We first consider the case when $|A|=1$. Then  $A=\{a_1 \}$ and $B=\{b_1, \ldots, b_5\}$.    By \cref{lem:blue}(a, c$_2$), $a_1$ is red-complete to $U$ and  blue-complete to $B$. It follows that $G[U]$ has no red edges.
 We first claim that each vertex in $U$ is blue-adjacent to at least four vertices in $B$. Suppose there exists a vertex $u\in U$ such that $u$ is blue-adjacent to exactly $j$ vertices in $B$, say $b_1, \ldots,  b_j$, where $2\le j\le3$. We may assume that $b_1b_3\notin E_b$ if $j=3$ because $G_b[B]$ is $K_3$-free. But then we obtain a critical coloring of $G+xu$ from $\tau$ by first coloring the edge $xu$ blue and then recoloring the edge $xb_2$ red, a contradiction.  Thus each vertex in $U$ is blue-adjacent to at least four vertices in $B$, as claimed. 
By \cref{lem:blue}(c$_1$), $G_b[B]$   contains a matching of size two, say $\{e_1, e_2\}$.   Let $U^*$ be the set of vertices $u\in U$ such that $u$ is blue-complete to $e_1$. Then each vertex in $U\less U^*$ must be blue-complete to $e_2$ because each vertex in $U$ is blue-adjacent to at least four vertices in $B$. Since $G_b$ is $K_4$-free, we see that neither  $G[U^*]$ nor $G[U\less U^*]$ has blue edges. Thus $U^*$ and $U\less U^*$ are stable  sets in $G[U]$ and so $\chi(G[U])\le2$, contrary to Claim~\ref{chiU}.    \medskip
   
      It remains to consider the case  $|A|=2$. Then  $A=\{a_1, a_2\}$ and $B=\{b_1, \ldots, b_4\}$. Recall  that  $a_1$ is  complete to $U_1$ and anti-complete to $U_2$ in $G_r$; and $a_2$ is  complete to $U_2$ and anti-complete to $U_1$ in $G_r$.   \medskip

  \noindent {\bf Claim\refstepcounter{counter}\label{eA}  \arabic{counter}.}    For every edge $e$ in  $G_b[B]$, either $a_1$ or $a_2$   is not blue-complete to $e$. 
  
  \begin{proof}
   Suppose there exists an edge $e$ in  $G_b[B]$, say $e=b_1b_2$, such that $\{b_1, b_2\}$  is blue-complete to $A$.   Let $u\in U$ such that $u$ is  not blue-complete to $\{b_3, b_4\}$ if   $b_3b_4\in E_b$ by    Claim~\ref{note}.   We obtain a critical coloring of $G+xu$ from $\tau$ by first  coloring the edge $xu$ blue and then recoloring   edges $xb_1, xb_2$ red,   a contradiction.  
   \end{proof}
   
     For each $i\in[2]$,  by Claim~\ref{A}, there exists an edge  $e_i$ in $G_b[B]$ such that $a_i$ is blue-complete to $e_i$. Then $e_1\ne e_2$ by Claim~\ref{eA}. We  may assume that $e_1=b_1b_2$ and  $e_2=b_3b_j$ for some $j\in\{1,2,4\}$.     For each $i\in[2]$, let
  \[U^*_i:=\{v\in U_i\mid v \text{ is blue-complete to } e_1\}.\]

  \noindent {\bf Claim\refstepcounter{counter}\label{a2U2}  \arabic{counter}.}  $U_1^*$ and $\{a_1\}\cup  U_2^*$ are  stable    set in $G$. 
\begin{proof}  Recall that $a_1$ is anti-complete to 
$U_2$ in $G_r$, and $G_r[U_i]$ has no red edges for each $i\in[2]$. Since $G_b$ is $K_4$-free and $\{b_1, b_2\}$ is blue-complete to $U_1^* \cup  U_2^*\cup \{a_1\}$, we see that  $G[U_1^* \cup  U_2^*\cup \{a_1\}]$ has no blue edges. Thus $U_1^*$ and $\{a_1\}\cup  U_2^*$ are  stable    set in $G$. 
\end{proof}

  \noindent {\bf Claim\refstepcounter{counter}\label{8parts}  \arabic{counter}.}  If $a_2$ is red-complete to $e_1$, then  there exists a vertex $v$ in   $ (U_1\less U_1^*)\cup (U_2\less U_2^*) $ such that $v$ is not blue-complete to  $e_2$.
  
  \begin{proof}   Suppose $a_2$ is red-complete to $e_1$, and every  vertex  in  $ U_1\less U_1^* $ and $ U_2\less U_2^* $   is   blue-complete to $e_2$.    Then   $\{b_1, b_2\}$ is anti-complete to $U_2\less U_2^*$ in $G_r$;  similar to the proof of Claim~\ref{a2U2}, we see that $ U_1\less U_1^* $ and $ U_2\less U_2^* $ are stable    set in $G$.   Recall that every vertex in $U_2\less U_2^*$  is not blue-complete to $e_1$. Let 
  \[B_1:=\{v\in U_2\less U_2^*\mid vb_1\notin E_b\} \,\, \text{ and }\,\, B_2:=\{v\in U_2\less U_2^*\mid vb_2\notin E_b \text{ and } v\notin B_1\}.\]  
Then $B_1\cup B_2=U_2\less U_2^*$.  By Claim~\ref{a2U2},   $U_1^*$ and $\{a_1\}\cup  U_2^*$ are  stable    set in $G$.  Thus  $G$ admits a proper $8$-coloring with color classes 
  \[\{x\}\cup U_1^*, \,\, \{a_1\}\cup U_2^*, \,\,  U_1\less U_1^*,  \,\,   \{a_2\}, \,\,   \{b_1\}\cup B_1,\,\,  \{b_2\}\cup B_2,  \,\,  \{b_3\},\,\,  \{b_4\},\] 
  contrary to   Claim~\ref{chiU}.  
\end{proof}

  \noindent {\bf Claim\refstepcounter{counter}\label{three}  \arabic{counter}.}   $G_b[B]$  has at least three edges,  and so $G_b[B]=P_4$ or $G_b[B]=C_4$.

\begin{proof} 
Suppose $G_b[B]$ has exactly two edges $e_1, e_2$. If   $b_j\in\{b_1, b_2\}$,  then  $b_3b_{3-j}\notin E_b$ because $G_b[B]$ is $K_3$-free. Let $u\in U$. Then we obtain a critical coloring of $G+xu$ from $\tau$ by first  coloring the edge $xu$ blue and then recoloring the edge $ xb_j$ red,   a contradiction. Thus $e_2=b_3b_4$.  Then each vertex in $U_1\less U_1^* $  and  $U_2\less U_2^* $ is blue-complete to $e_2$.  Thus $U_1\less U_1^* $  and  $U_2\less U_2^* $ are stable sets in $G$. If $a_2b_s\notin E(G)$ for some $s\in[2]$, then  $G$ admits a proper $8$-coloring with color classes 
  \[\{x\}\cup U_1^*,\,\,  \{a_1\}\cup U_2^*, \,\,  U_1\less U_1^*,  \,\, U_2\less U_2^*,\,\,  \{a_2, b_s\},  \,\, \{b_{3-s}\}, \,\, \{b_3\}, \,\, \{b_4\},\] 
  contrary to   Claim~\ref{chiU}.  Thus  $a_2b_1, a_2b_2\in E(G)$.  By 
  Claim~\ref{8parts},  $a_2b_s\notin E_r$ for some $s\in[2]$. Then $a_2b_s\in E_b$.  By Claim~\ref{eA}, $a_2b_{3-s}\notin E_b$ and so    $a_2b_{3-s}\in E_r$.  By Claim~\ref{note} applied to $e_2$,  let $u\in U$ such that $u$ is not blue-complete to $e_2$. Then we obtain a critical coloring of $G+xu$ by first coloring the edge $xu$ blue and then recoloring $xb_s$ red, a contradiction.  This proves that  $G_b[B]$  has at least three edges.  Recall that $G_b[B]$ is $K_3$-free. By Claim~\ref{noB}, we see that  $G_b[B]=P_4$ or $G_b[B]=C_4$. 
\end{proof}

  \noindent {\bf Claim\refstepcounter{counter}\label{matching}  \arabic{counter}.}   $\{e_1, e_2\}$ is a matching in $G_b[B]$ for any choice of $e_1, e_2$.   
   
  \begin{proof}    Suppose $b_j\in\{b_1, b_2\}$. We may assume that $e_2=b_2b_3$.  Then $ b_1b_3\notin E_b$. By Claim~\ref{eA}, $a_1b_3, a_2b_1\notin E_b$. By Claim~\ref{three},  $b_1b_4\in E_b$ or $b_3b_4\in E_b$,  say the latter.  Then $b_2b_4\notin E_b$.    Moreover,   if $ub_4\notin E_b$ for some $u\in U$, then     we obtain a critical coloring of $G+xu$ from $\tau$ by first  coloring the edge $xu$ blue and then recoloring the edge  $  xb_2$ red, a contradiction.  
Thus  $b_4$ is blue-complete to $U$.   Suppose   $b_1b_4\notin E_b$.  By Claim~\ref{note}, there exists $u\in U$ such that $ub_3\notin E_b$;  we obtain a critical coloring of $G+xu$ from $\tau$ by first  coloring the edge $xu$ blue and then recoloring the edge  $  xb_2$ red, a contradiction. Thus $b_1b_4\in E_b$.  For each $i\in[2]$, let 
  \[W^*_1:=\{v\in U_1\mid vb_3\notin E_b\} \,\, \text{ and }\,\, W_2^*:=\{v\in U_2 \mid vb_1\notin E_b\}.\]  
 Then each vertex in $W_1^*$ and $U_2\less W_2^*$ is blue-complete to $\{b_1, b_4\}$, and   each vertex in $ W_2^*$ and $U_1\less W_1^*$  is blue-complete to $\{b_3, b_4\}$.  Thus $W_1^*, W_2^*, U_1\less W_1^*,  U_2\less W_2^*$ are pairwise disjoint stable  sets in $G[U]$.   
 Note that if $b_1a_2\in E_r$, then $b_1$ is anti-complete to $W_2^*$ in $G_r$. Similarly, if   $b_3a_1\in E_r$, then $b_3$ is anti-complete to $W_1^*$ in $G_r$. Thus  $G$ admits a proper $8$-coloring with color classes 
  \[\{x\}\cup W_1^*,  \,\,  W_2^*,  \,\,    U_1\less W_1^*, \,\,  U_2\less W_2^*, \,\,   \{a_2, b_1\},  \,\, \{ b_2\}, \,\, \{a_1, b_3\}, \,\, \{b_4\} \text{ if } a_2b_1, a_1b_3\notin E_r,\]
   \[\{b_3\}\cup W_1^*,  \,\,  \{x\}\cup W_2^*,  \,\,    U_1\less W_1^*, \,\,  U_2\less W_2^*, \,\,   \{a_2, b_1\},  \,\, \{ b_2\}, \,\, \{a_1\}, \,\, \{b_4\} \text{ if } a_2b_1 \notin E_r, a_1b_3\in E_r,\]
    \[ \{x\}\cup W_1^*,  \,\,  \{b_1\}\cup W_2^*,  \,\,    U_1\less W_1^*, \,\,  U_2\less W_2^*, \,\,   \{a_2\},  \,\, \{ b_2\}, \,\, \{a_1, b_3\}, \,\, \{b_4\} \text{ if } a_2b_1 \in E_r, a_1b_3\notin E_r,\]
    \[\{  b_3\}\cup W_1^*,  \,\,  \{b_1\}\cup W_2^*,  \,\,   (\{x\}\cup U_1)\less W_1^*, \,\,  U_2\less W_2^*, \,\,   \{a_2\},  \,\, \{ b_2\}, \,\, \{a_1\}, \,\, \{b_4\} \text{ if } a_2b_1,  a_1b_3\in E_r,\] 
contrary to  Claim~\ref{chiU}. \end{proof}

   By Claim~\ref{matching}, we have $e_2= b_3b_4$. By Claim~\ref{three}, we may assume that $b_2b_3\in E_b$. By Claim~\ref{matching} again, $a_1b_3, a_2b_2\notin E_b$.   Moreover,  $b_1b_3, b_2b_4\notin E_b$ because $G_b[B]$ is $K_3$-free. We first consider the case    $a_1b_4\notin E_b$ and  $a_2b_1\notin E_b$. Then $b_1b_4\in E_b$, else for any $u\in U$ we obtain a critical coloring of $G+xu$ from $\tau$ by   first  coloring the edge $xu$ blue and then recoloring  edges $xa_1, xa_2$ blue, and $xb_2, xb_3$ red, a contradiction.  Suppose there exists a vertex $u\in U$ such that $u$ is blue-complete to both $e_1$ and $e_2$. Then we obtain a critical coloring of $G+xu$ from $\tau$ by   first  coloring the edge $xu$ red and then recoloring  edges $xa_1, xa_2$ blue, and $xb_2, xb_3$ red, a contradiction. Thus no vertex in $U$ is blue-complete to both $e_1$ and $e_2$. It follows that every vertex in $U_1\less U_1^*$ and $U_2\less U_2^*$ is blue-complete to $e_2$. By Claim~\ref{8parts}, $a_2$ is not red-complete to $e_1$. We may assume that $a_2b_i\notin E(G)$ for some $i\in[2]$. Since $e_2$ is blue-complete to $U_1\less U_1^* $  and  $U_2\less U_2^* $, we see that    $U_1\less U_1^* $  and  $U_2\less U_2^* $ are stable sets in $G$. Recall that $a_2b_1, a_2b_2\notin E_b$. Then $G$ admits a proper $8$-coloring with color classes 
  \[\{x\}\cup U_1^*, \,\, U_1\less U_1^*,   \,\,  \{a_1\}\cup U_2^*, \,\, U_2\less U_2^*,  \,\,  \{a_2, b_i\}, \,\,\{b_{3-i}\}, \,\, \{b_3\}, \,\, \{b_4\},\] 
  contrary to   Claim~\ref{chiU}.

   It remains to consider the case  $a_1b_4\in E_b$ or    $a_2b_1\in E_b$.  By symmetry, we may assume that $a_1b_4\in E_b$. By Claim~\ref{matching}, $b_1b_4\notin E_b$.  
Suppose there exists a vertex $u\in U$ such that  $u$ is only blue-adjacent to $b_2, b_3$ in $B$. Then we obtain a  critical coloring of $G+xu$ from $\tau$ by first  coloring the edge $xu$ red and then recoloring  edges $xa_1, xa_2$ blue, and $xb_2, xb_3$ red, a contradiction.     Thus no vertex in $U$ is only blue-adjacent to $b_2, b_3$ in $B$.      Then each vertex in $U_1\less U_1^* $  and  $U_2\less U_2^* $ is blue-complete to $e_2$; and so $U_1\less U_1^* $  and  $U_2\less U_2^* $ are stable sets in $G$.  Recall that $a_2b_2\notin E_b$.  
 If  $a_2b_2\notin E_r$  or $a_2b_1\notin E(G)$,     then $G$ admits a proper $8$-coloring with color classes 
  \[\{x\}\cup U_1^*, \,\, \{a_1\}\cup U_2^*, \,\,  U_1\less U_1^*,  \,\, U_2\less U_2^*,  \,\, \{b_{1}\}, \,\, \{a_2, b_2\}, \,\, \{b_3\}, \,\, \{b_4\} \text{ if } a_2b_2\notin E_r, \] 
   \[\{x\}\cup U_1^*, \,\, \{a_1\}\cup U_2^*, \,\,  U_1\less U_1^*,  \,\, U_2\less U_2^*, \,\, \{a_2, b_1\},  \,\, \{b_{2}\}, \,\, \{b_3\}, \,\, \{b_4\} \text{ if } a_2b_1\notin E(G),\]  
 contrary to   Claim~\ref{chiU}.   Thus $a_2b_2\in E_r$ and $a_2b_1\in E(G)$. By 
  Claim~\ref{8parts},    $a_2b_1\in E_b$ because each vertex in $U_1\less U_1^* $  and  $U_2\less U_2^* $ is blue-complete to $e_2$. 
     By symmetry of $a_1$ and $a_2$, we see that $a_1b_3\in E_r$. 
   By Claim~\ref{note}, let $u\in U$ such that $u$ is not blue-complete to $b_2b_3$. Then $ub_2\notin E_b$ or $ub_3\notin E_b$. Thus we obtain a critical coloring of $G+xu$   obtained from $\tau$ by first coloring the edge $xu$ blue,   and then recoloring the edge $xb_1$ red  if  $ub_2\notin E_b$ and $xb_4$ red  if  $ub_3\notin E_b$, a contradiction. \medskip

This completes the proof of  \cref{K3K4}.
\end{proof}



\begin{thebibliography}{9}
 \bibitem{BEFS} S. A. Burr, P. Erd\H{o}s, R. J. Faudree and R. H. Schelp, On the difference between consecutive Ramsey numbers, Utilitas Math. 35 (1989) 115--118.
     \bibitem{P3} G. Chen, Z. Miao, Z.-X. Song and J. Zhang, On the size of special class 1 graphs and     $(P_3; k)$-co-critical graphs, Discrete Math. 344 (2021), 112604. 
    
 \bibitem{Pk} G. Chen, Z. Miao, Z.-X. Song and J. Zhang,   On the size of $(K_t, P_k)$-co-critical graphs,  Graphs and Combinatorics 38, 136  (2022).
\bibitem{c4star} G. Chen, C. Ren and Z.-X. Song, Minimizing the number of edges in $(C_4, K_{1,k})$-co-critical graphs,  Discrete Math. 348 (2025), 114409.

 
\bibitem{Chen2011}
	G. Chen, M. Ferrara, R. J. Gould, C. Magnant and J. Schmitt,  
	Saturation numbers for families of Ramsey-minimal graphs,
 J. Comb.  2 (2011) 435--455.
 
   \bibitem{DSY22}
H. Davenport, Z.-X. Song and F. Yang, On the size of $(K_t,K_{1,k})$-co-critical graphs, European J. Combin.  104 (2022), 103533.
  

 
  \bibitem{survey21} J. R. Faudree, R. J. Faudree and J. R. Schmitt, A survey of minimum saturated graphs,  Electron. J. Combin.   (2021),  DS19.
 
 
\bibitem{Ferrara2014}  
       M.  Ferrara, J. Kim and E. Yeager,  
       Ramsey-minimal saturation numbers for matchings, 
        Discrete Math. 322 (2014) 26--30.
    
\bibitem{Galluccio1992}  
       A. Galluccio,  M. Simonovits and G. Simonyi,   On the structure of co-critical graphs, 
       Graph theory, combinatorics, and algorithms, Vol. 1,2 (Kalamazoo, MI, 1992), 
       1053--1071,    Wiley-Intersci. Publ., Wiley, New York, 1995.
   
	  
 	 
\bibitem{Hajnal} 
       A. Hajnal, A theorem on $k$-saturated graphs,         Canad. J. Math. 17 (1965) 720--724.
 

\bibitem{Hanson1987}
	D. Hanson and B. Toft,  
	Edge-colored saturated graphs,
	 J. Graph Theory 
	 11 (1987) 191--196.
  	 
\bibitem{Nesetril1986}
       J. Ne$\check{s}$et$\check{r}$il,  Problem,  in  Irregularities of Partitions,   (eds G. Hal\'asz and V. T. S\'os),            Springer Verlag,  Series Algorithms and Combinatorics, vol 8, (1989) P164. (Proc. Coll. held at Fert\H{o}d, Hungary 1986). 
  
  

  \bibitem{RolekSong18}
	M. Rolek and  Z.-X. Song,
	 Saturation numbers for Ramsey-minimal graphs, 
	 Discrete Math. 341 (2018) 3310--3320.
 
 	
      

\bibitem{SongZhang21} Z.-X. Song and J. Zhang, On the size of $(K_t, \mathcal{T}_k)$-co-critical graphs,   Electron. J. Combin. 28(1) (2021), \#P1.13.
  \bibitem{Szabo1996} T. Szab\'o, \newblock On nearly regular co-critical graphs, \newblock \emph{Discrete Math.} 160 (1996) 279--281.
\end{thebibliography}
\end{document}